\newtheorem{theorem}{Theorem}[section]
\newtheorem{theo}[theorem]{Theorem}
\newtheorem{lem}[theorem]{Lemma}
\newtheorem{prop}[theorem]{Proposition}
\newtheorem{cor}[theorem]{Corollary}
\newtheorem{crit}[theorem]{Criterion}
\theoremstyle{df}
\newtheorem{df}[theorem]{Definition}
\theoremstyle{notation}
\theoremstyle{fact}
\theoremstyle{definition}
\theoremstyle{remark}
\numberwithin{equation}{section}
\newcommand{\PGL}{\mathop{\mathrm{PGL}}}
\newcommand{\SB}{\mathop{\mathrm{SB}}}
\newcommand{\CCor}{\mathop{\mathrm{cor}}}
\newcommand{\CM}{\mathop{\mathrm{CM}}}
\newcommand{\Spec}{\mathop{\mathrm{Spec}}}
\newcommand{\SO}{\mathop{\mathrm{SO}}}
\newcommand{\fonction}[5]{ #1~:\begin{array}{ccc}
 #2 & \longrightarrow & #3 \\
 #4 & \longmapsto & #5 \end{array}}
\begin{document}
\selectlanguage{english}
\begin{center}
{\Large
\textbf{Motivic equivalence of algebraic groups}

{\small Charles De Clercq}\\
}
\end{center}

\vspace{1cm}

{\small \textsc{Abstract.} Two semisimple algebraic groups of the same type are said to be motivic equivalent if the motives of the associated projective homogeneous varieties of the same type are isomorphic. We give general criteria of motivic equivalence in terms of the so-called higher Tits $p$-indices of algebraic groups. These results allow to give a complete classification of absolutely simple classical groups up to motivic equivalence in terms of the underlying algebraic structures. Among other applications of this classification, we deduce that there is a bijection between the stable birational equivalence classes of Severi-Brauer varieties of fixed dimension and the motivic equivalence classes of projective linear groups of the same rank.}

\vspace{1cm}

In this note we fix a prime $p$ and a base field $F$. By motive, we will always mean Grothendieck Chow motives with coefficients in $\mathbb{F}_p$, i.e. we will work in the category $\CM(F;\mathbb{F}_p)$ (see \cite{EKM}). Let $G$ and $G'$ be two semisimple algebraic groups which are twisted forms of the same algebraic group over the separable closure of the base field. We say that $G$ and $G'$ are $\ast$-equivalent if the $\ast$-action on the Dynkin diagram of $G$ and $G'$ coincide.

Recall that one can associate to any projective $G$-homogeneous variety $X$ a $\ast$-invariant subset $\Theta$ of the Dynkin diagram $\Delta_G$ of $G$. The isomorphism class of the projective $G$-homogeneous varieties of type $\Theta$ is denoted by $X_{\Theta,G}$.  We fix the convention that $X_{\Delta_G,G}$ is the isomorphism class of the variety of Borel subgroups of $G$, and we will often abuse notation and write $X_{\Theta,G}$ for a fixed projective $G$-homogeneous variety of type $\Theta$, and $F_{\Theta,G}$ for the function field of $X_{\Theta,G}$.

Consider a finite and separable field extension $L/F$ and a smooth projective $L$-variety $Y$. The corestriction $\CCor{}_{L/F}(Y)$ of $Y$ to $F$ is the $F$-variety $Y$ given by the composition $\Spec(L)\rightarrow \Spec(F)$ (see \cite{outer}). This functor on smooth projective  varieties induces the corestriction functor $\CCor{}_{L/F}:\CM(L;\mathbb{F}_p)\longrightarrow \CM(F;\mathbb{F}_p)$ on motives.

Let $G$ be a semisimple algebraic group and $E/F$ a minimal field extension such that $G_E$ is of inner type. If $\Theta$ is a subset of $\Delta_G$, consider a minimal field extension $L/F$ contained in $E$ such that $\Theta$ is $\ast$-invariant for $G_L$. We associate to the subset $\Theta$ the following data :
\begin{enumerate}
\item $M_{\Theta,G}$ is the motive $\CCor_{L/F}(X_{\Theta,G_{L}})$;
\item $U_{\Theta,G}$ is the upper motive of $M_{\Theta,G}$ (see \cite{upper}, \cite{outer}).
\end{enumerate}
The motive $M_{\Theta,G}$ (resp. $U_{\Theta,G}$) is the standard motive of type $\Theta$ (resp. standard upper motive of type $\Theta$) of $G$.\\

\begin{df}
Two semisimple and $\ast$-equivalent algebraic groups $G$ and $G'$ are \emph{motivic-equivalent modulo $p$} if for any subset $\Theta$ of the common Dynkin diagram of $G$ and $G'$, the standard motives of type $\Theta$ of $G$ and $G'$ are isomorphic. If $G$ and $G'$ are motivic equivalent modulo $p$ for any prime $p$, we will say that the algebraic groups $G$ and $G'$ are \emph{motivic equivalent}.
\end{df}

\section{Upper equivalence and tractable field extensions}

Let $X$ and $Y$ be two smooth projective varieties over $F$. The variety $Y$ dominates $X$ if there is a multiplicity $1$ correspondence $Y\rightsquigarrow X$ (with coefficients in $\mathbb{F}_p$). If moreover $X$ dominates $Y$, the varieties $X$ and $Y$ are said to be $p$-equivalent.

\begin{df}
Two semisimple and $\ast$-equivalent algebraic groups $G$ and $G'$ are \emph{upper-equivalent} if for any subset $\Theta$ of the common Dynkin diagram of $G$ and $G'$, the varieties $X_{\Theta,G}$ and $X_{\Theta,G'}$ are $p$-equivalent.
\end{df}

From now on, we will only consider semisimple $p$-inner algebraic groups, i.e. semisimple algebraic groups which become of inner type over a finite Galois field extension $E/F$ whose degree is a power of $p$. We say that $Y$ is a quasi-$G$-homogeneous variety if $Y$ is the corestriction to $F$ of a projective $G_L$-homogeneous variety for some intermediate field extension $E/L/F$. The upper motive of a projective quasi-$G$-homogeneous variety $Y$ is denoted by $U_Y$. According to \cite{upper}, two quasi-$G$-homogeneous varieties are $p$-equivalent if and only if their upper motives are isomorphic. In particular semisimple algebraic groups $G$ and $G'$ are upper equivalent if and only if for any subset $\Theta$, the standard upper motives $U_{\Theta,G}$ and $U_{\Theta,G'}$ are isomorphic.

\begin{df}
Let $G$ be a semisimple algebraic group and $X$ be a projective $G$-homogeneous variety. A field extension $L/F$ is \emph{tractable for $X$} if for any two quasi-$G$-homogeneous varieties $Y$ and $Y'$ which dominate $X$, $Y_L$ and $Y'_L$ are $p$-equivalent if and only if $Y$ and $Y'$ are $p$-equivalent.
\end{df}

The class of tractable field extensions for $X$ is obviously stable by sub-extensions and composition. Unirational field extensions are tractable for any projective homogeneous variety $X$. An essential observation is Lemma \ref{tractable}, which asserts that the function field of a projective homogeneous variety $X$ is tractable for $X$.

We will see in Proposition \ref{calc} that these extensions may be used to reconstruct the motive of a projective homogeneous variety over the base field. For the sake of formalizing this property, we introduce some material.

For any semisimple $p$-inner algebraic group $G$ of over $F$, let $\CM_G(F;\mathbb{F}_p)$ be the thick subcategory of $\CM(F;\mathbb{F}_p)$ generated by twisted direct summands of projective quasi-$G$-homogeneous varieties. The set of the upper motives of $G$, denoted by $U_G$, is the set of all the isomorphism classes of upper motives of projective quasi-$G$-homogeneous varieties. For any $M\in \CM_G(F;\mathbb{F}_p)$, the characteristic function of $M$ is defined as
$$\fonction{\Phi_M}{U_G\times \mathbb{Z}}{\mathbb{N}}{(U,i)}{\sharp\{\mbox{direct summands of $M$ isomorphic to $U[i]$}\}}.$$
Note that if $G$ and $G'$ are both $p$-inner and are upper equivalent, two motives $M$ and $M'$ which belong to $\CM_G(F;\mathbb{F}_p)$$=$$\CM_{G'}(F;\mathbb{F}_p)$ are isomorphic if and only if $\Phi_M=\Phi_{M'}$.

For any motive $M\in \CM_G(F;\mathbb{F}_p)$, we denote by $M^{\geq i}$ (resp. $M^{\leq i}$) the motive which is given by the direct sum of all the direct summands of $M$ which are upper motives shifted by some $j\geq i$ (resp. $j\leq i$). We also define $M^{<i}$ and $M^{>i}$ in the same way.

\begin{prop}\label{calc}
Assume $L/F$ is a tractable field extension for a projective $G$-homogeneous variety $X$ of type $\Theta$. For any projective quasi-$G$-homogeneous variety $Y$ dominating $X$,
$$\Phi_{M_{\Theta,G}}(U_Y,i)=\Phi_{(M_{\Theta,G_L})}(U_{Y_L},i)-\Phi_{(M_{\Theta,G}^{<i})_L}(U_{Y_L},i).$$
\end{prop}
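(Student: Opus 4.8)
The plan is to rewrite the identity in the equivalent form
$$\Phi_{M_{\Theta,G_L}}(U_{Y_L},i)=\Phi_{M_{\Theta,G}}(U_Y,i)+\Phi_{(M_{\Theta,G}^{<i})_L}(U_{Y_L},i),$$
to observe that $\Theta$ is already $\ast$-invariant for $G$, so that $M_{\Theta,G}$ and $M_{\Theta,G_L}$ are the motives of $X$ and of $X_L$ and $M_{\Theta,G_L}=(M_{\Theta,G})_L$, and then to compute the multiplicity of $U_{Y_L}[i]$ in $(M_{\Theta,G})_L$ by decomposing $M:=M_{\Theta,G}$ over $F$ and restricting. Recall that $M$ decomposes as a finite direct sum of shifts of upper motives of $G$, uniquely up to isomorphism (the Krull--Schmidt property underlying the definition of the characteristic function $\Phi$). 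I would split $M=M^{<i}\oplus M^{=i}\oplus M^{>i}$, where $M^{=i}$ gathers the summands that are upper motives shifted by exactly $i$, and use additivity of restriction to write $\Phi_{M_L}(U_{Y_L},i)$ as the sum of the contributions of $(M^{<i})_L$, $(M^{=i})_L$ and $(M^{>i})_L$.

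The first step is to discard $(M^{>i})_L$. Each summand of $M^{>i}$ has the form $U_Z[j]$ with $j>i$, and the restriction to $L$ of the upper motive of a projective quasi-$G$-homogeneous variety is a sum of upper motives shifted by nonnegative amounts (over a separable closure an upper motive is a nonzero sum of Tate motives with nothing in negative twist); hence $(U_Z[j])_L$ involves only shifts $\geq j>i$, and none of its summands is isomorphic to $U_{Y_L}[i]$. The contribution of $(M^{<i})_L$ is then, by definition, exactly $\Phi_{(M^{<i})_L}(U_{Y_L},i)$, so the whole statement comes down to showing that the contribution of $(M^{=i})_L$ equals $\Phi_M(U_Y,i)$.

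For this I would write $M^{=i}=\bigoplus_Z U_Z[i]^{\oplus\Phi_M(U_Z,i)}$, the sum running over representatives of the occurring isomorphism classes of upper motives, so that $\Phi_{(M^{=i})_L}(U_{Y_L},i)=\sum_Z\Phi_M(U_Z,i)\cdot\Phi_{(U_Z)_L}(U_{Y_L},0)$. Since $G$ is $p$-inner and $X$ is geometrically irreducible, the occurring $U_Z$ may be taken to be upper motives of geometrically irreducible projective $G$-homogeneous varieties; for such $Z$ the shift-$0$ part of $M(Z)_{\bar F}=M(Z_L)_{\bar F}$ is one-dimensional and lies in $U_Z$, hence in $(U_Z)_L$, so it is carried by a single indecomposable summand of $(U_Z)_L$, which must be the upper motive $U_{Z_L}$ of $Z_L$ and which therefore occurs exactly once. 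Thus $(U_Z)_L$ has a unique summand of shift $0$, namely $U_{Z_L}$, with multiplicity one, so $\Phi_{(U_Z)_L}(U_{Y_L},0)$ is $1$ if $U_{Z_L}\cong U_{Y_L}$ and $0$ otherwise, and $\Phi_{(M^{=i})_L}(U_{Y_L},i)=\sum_{Z:\,U_{Z_L}\cong U_{Y_L}}\Phi_M(U_Z,i)$. It then remains to prove that, among the upper motives $U_Z$ occurring in $M$, one has $U_{Z_L}\cong U_{Y_L}$ precisely when $U_Z\cong U_Y$.

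This equivalence is where tractability enters, and it is the step I expect to be the main obstacle. One direction is base change. For the other, each occurring $U_Z$ is the upper motive of a projective quasi-$G$-homogeneous variety $Z$ dominated by $X$; since $Y$ dominates $X$, composing multiplicity-$1$ correspondences shows that $Y$, hence $Y_L$, dominates $Z$, resp.\ $Z_L$, while $X_L$ dominates $Z_L$ as well. If $U_{Z_L}\cong U_{Y_L}$ then $Z_L\sim_p Y_L$, so $X_L$ dominates $Y_L$; as $Y_L$ also dominates $X_L$, we get $X_L\sim_p Y_L$, and tractability of $L$ for $X$ --- applied to the pair $X$, $Y$, both of which dominate $X$ --- forces $X\sim_p Y$. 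When $Y\not\sim_p X$ this is a contradiction, and moreover $U_Y$ then does not occur in $M$, so both sides of the identity vanish. When $Y\sim_p X$ one has $U_Y=U_X$, and if $U_Z$ occurs in $M$ with $U_{Z_L}\cong U_{X_L}$, say in shift $i_0$, then $U_{X_L}[i_0]$ is a summand of $M(X_L)=M_L$ (coming from $(U_Z)_L$) while $U_{X_L}[0]$ is a summand of $M(X_L)$ as well (coming from $(U_X)_L$); since the upper motive $U_{X_L}$ occurs in $M(X_L)$ with total multiplicity one, $i_0=0$, and then $U_Z=U_X$ because $U_X$ is the unique summand of $M$ of shift $0$. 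In both cases $\sum_{Z:\,U_{Z_L}\cong U_{Y_L}}\Phi_M(U_Z,i)=\Phi_M(U_Y,i)$, which completes the argument. The subtle point is precisely the interplay between domination, base change and tractability used above: tractability is formulated only for varieties dominating $X$, but the hypothesis that $Y$ dominates $X$, together with the transitivity of multiplicity-$1$ correspondences, pulls every comparison back to a pair to which it applies, and the multiplicity-one property of upper motives does the rest.
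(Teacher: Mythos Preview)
Your overall strategy --- split $M_{\Theta,G}$ according to shifts, discard the contribution of $(M^{>i})_L$, identify the shift-$0$ part of each $(U_Z)_L$ with $U_{Z_L}$, and then use tractability to match up the $Z$'s over $F$ and over $L$ --- is exactly the paper's approach. The problem lies entirely in the last step, where you invoke tractability.

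The key input you have backwards is the direction of domination. You write that each $U_Z$ occurring in $M(X)$ is the upper motive of a variety $Z$ \emph{dominated by} $X$; the theory of upper motives (Karpenko, \cite{upper},\cite{outer}) gives the opposite: every indecomposable summand of $M(X_{\Theta,G})$ is a shift of some $U_{\Theta',G}$ with $\Theta'\supseteq\Theta$, and $X_{\Theta',G}$ \emph{dominates} $X_{\Theta,G}$ via the natural projection. (Concretely, for $A$ a division algebra of degree $4$ and $X=\SB_2(A)$, one has $Z=\SB(A)$ dominating $X$, but not conversely, since $\mathrm{ind}(A_{F(\SB_2(A))})=2$.) With the correct direction, both $Y$ and $Z$ dominate $X$, so tractability applies \emph{directly} to the pair $(Y,Z)$: $U_{Z_L}\cong U_{Y_L}\Longleftrightarrow U_Z\cong U_Y$, and your sum collapses to $\Phi_M(U_Y,i)$ with no case distinction. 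This is precisely what the paper does.

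Your detour through the pair $(X,Y)$ and the subsequent case split is not only unnecessary but also incorrect. In your Case~2 you assert that ``the upper motive $U_{X_L}$ occurs in $M(X_L)$ with total multiplicity one''. This fails in the very situations of interest: for the tractable extensions actually used (such as $L=F(X)$), the variety $X_L$ is isotropic, so $U_{X_L}$ can be a Tate motive and will then appear in $M(X_L)$ many times. Likewise, your Case~1 claim that ``$U_Y$ does not occur in $M$'' when $Y\not\sim_p X$ is not a general feature of motivic decompositions of homogeneous varieties; it only follows \emph{after} one has the correct tractability step. Once you fix the direction of domination, the whole case analysis disappears and the argument becomes the one-paragraph proof in the paper.
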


\begin{proof}
The motivic decomposition $M_{\Theta,G}=M_{\Theta,G}^{\geq i}\oplus M_{\Theta,G}^{< i}$ yields
$$\Phi_{M_{\Theta,G_L}}(U_{Y_L},i)=\Phi_{(M_{\Theta,G}^{\geq i})_K}(U_{Y_L},i)+\Phi_{(M_{\Theta,G}^{< i})_K}(U_{Y_L},i),$$
we thus have to show that $\Phi_{M_{\Theta,G}}(U_Y,i)=\Phi_{(M_{\Theta,G}^{\geq i})_L}(U_{Y_L},i)$.

Consider a direct summand $M$ of $(M_{\Theta,G}^{\geq i})_L$ which is isomorphic to $U_{Y_L}[i]$. By the Krull-Schmidt property, the motive $M$ comes from an indecomposable summand $N$ of $M_{\Theta,G}^{\geq i}$. The theory of upper motives asserts that $N$ is isomorphic to some $U_{Y'}[j]$, where $Y'$ is a projective quasi-$G$-homogeneous variety which dominates $X$.

One easily sees that by the definition of $M_{\Theta,G}^{\geq i}$ , $i$ is equal to $j$ and the upper motive of $N_L$ must be isomorphic to $U_{Y_L}[i]$. However the upper motive of $N_L$ being isomorphic to $U_{Y'_L}$, we get an isomorphism between $U_{Y_L}$ and $U_{Y'_L}$. Since by assumption the field extension $L/F$ is tractable for $X$, the motives $U_{Y}$ and $U_{Y'}$ are isomorphic. We thus have shown that the number of direct summands isomorphic to $U_{Y}[i]$ in the motivic decomposition of $M_{\Theta,G}^{\geq i}$ is the same as the number of indecomposable summands isomorphic to $U_{Y_L}[i]$ inside $(M_{\Theta ,G}^{\geq i})_L$, that is to say $\Phi_{M_{\Theta,G}^{\geq i}}(U_{Y},i)=\Phi_{(M_{\Theta,G}^{\geq i})_L}(U_{Y_L},i)$. By the very definition of $M_{\Theta,G}^{\geq i}$, the left side of this equality is precisely $\Phi_{M_{\Theta,G}}(U_{Y},i)$.
\end{proof}

Note that if $G$, $G'$ are two upper-equivalent algebraic groups and $\Theta$ is a $\ast$-invariant subset for both $G$ and $G'$, a field extension $L/F$ is tractable for $X_{\Theta,G}$ if and only if it is tractable for $X_{\Theta,G'}$.

\begin{lem}\label{tractable}
Let $G$ be a semisimple algebraic group and let $X$ be a projective $G$-homogeneous variety. The function field $F(X)/F$ is tractable field extension for $X$.
\end{lem}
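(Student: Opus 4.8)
The plan is to reduce the statement to a known criterion for $p$-equivalence of quasi-$G$-homogeneous varieties, namely that two such varieties are $p$-equivalent over a field $K$ if and only if their upper motives over $K$ are isomorphic, or equivalently if and only if each dominates the other over $K$. So let $Y$ and $Y'$ be two quasi-$G$-homogeneous varieties dominating $X$; I must show that $Y_{F(X)}$ and $Y'_{F(X)}$ are $p$-equivalent precisely when $Y$ and $Y'$ are. One direction is immediate: if $Y$ and $Y'$ are $p$-equivalent over $F$, then applying the base-change functor $\CM(F;\mathbb{F}_p)\to\CM(F(X);\mathbb{F}_p)$ to the mutual multiplicity-$1$ correspondences shows $Y_{F(X)}$ and $Y'_{F(X)}$ are $p$-equivalent. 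The content is the converse.

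So assume $Y_{F(X)}$ and $Y'_{F(X)}$ are $p$-equivalent; I want to produce multiplicity-$1$ correspondences $Y\rightsquigarrow Y'$ and $Y'\rightsquigarrow Y$ over $F$. The key input is the nilpotence principle together with the fact that $F(X)$ is the function field of $X$, which $Y$ dominates: a multiplicity-$1$ correspondence $Y_{F(X)}\rightsquigarrow Y'_{F(X)}$ can be spread out to a correspondence $\alpha\colon Y\times X\rightsquigarrow Y'$ over $F$, whose generic fiber (i.e. restriction over $F(X)$) is the given one and hence has multiplicity $1$. Composing $\alpha$ with a multiplicity-$1$ correspondence $Y\rightsquigarrow X$ (which exists since $Y$ dominates $X$) and with the diagonal one obtains a correspondence $Y\rightsquigarrow Y'$ over $F$; the point is to check its multiplicity is $1$. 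This is where one uses that multiplicities are computed by pulling back to a rational point of the source after passing to a splitting field, or more directly that the multiplicity of the composite is the product of the relevant multiplicities modulo $p$ — here all the pieces have multiplicity $1$. Symmetrically one gets $Y'\rightsquigarrow Y$, so $Y$ and $Y'$ are $p$-equivalent over $F$.

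The step I expect to be the main obstacle is making the spreading-out argument rigorous: identifying $\Corr(Y_{F(X)}, Y'_{F(X)})$ with a suitable limit/localization of $\Corr(Y\times X, Y')$ and verifying that the multiplicity-$1$ condition is preserved under specializing the generic point of $X$ back down, i.e. that the composite correspondence $Y\rightsquigarrow Y\times X\rightsquigarrow Y'$ still has multiplicity $1$ over $F$. This is essentially the standard mechanism behind the behavior of upper motives under passage to function fields of dominating varieties (as in the cited theory of upper motives), and the cleanest route is probably to invoke that theory directly: $U_{Y_{F(X)}}\cong U_{Y'_{F(X)}}$ forces, via the classification of indecomposable summands over $F(X)$ of motives of quasi-$G$-homogeneous varieties, that $U_Y$ and $U_{Y'}$ already coincide over $F$ — because $Y$ dominates $X$ means $U_Y$ does not "disappear" (become smaller) over $F(X)$, so the upper motives over $F(X)$ detect those over $F$ among varieties dominating $X$. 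Translating this detection statement back through the upper-motive criterion for $p$-equivalence then yields that $Y$ and $Y'$ are $p$-equivalent over $F$, completing the proof that $F(X)/F$ is tractable for $X$.
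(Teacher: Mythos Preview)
Your spreading-out approach is correct and is genuinely different from the paper's argument. The paper argues geometrically: since $Y$ dominates $X$, the variety $X_{F(Y)}$ has a closed point whose residue field $L/F(Y)$ has degree coprime to $p$; over $L$ the projective homogeneous variety $X$ is isotropic, so $L(X)/L$ is purely transcendental, and homotopy invariance of Chow groups then brings the coprime-to-$p$ zero-cycle on $Y'$ down from $F(Y)\cdot F(X)\subset L(X)$ to $L$, hence to $F(Y)$. Your route bypasses the rationality of isotropic homogeneous varieties: spreading out the multiplicity-$1$ correspondence to $\tilde\alpha\colon Y\times X\rightsquigarrow Y'$ and composing with the multiplicity-$1$ correspondence $Y\rightsquigarrow Y\times X$ built from the diagonal and $\beta\colon Y\rightsquigarrow X$ really does give a multiplicity-$1$ correspondence $Y\rightsquigarrow Y'$, because multiplicity is multiplicative under composition whenever the middle variety is integral (a projection-formula computation: $(p_{Y\times X})_*\tilde\alpha=\mathrm{mult}(\tilde\alpha)\cdot[Y\times X]$ in $\CH^0(Y\times X)$, and $Y\times_F X$ is integral since $X$ is geometrically integral). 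In particular your argument never uses that $X$ is homogeneous, whereas the paper's does.

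Two cautions. First, your worry is misplaced: the spreading-out $\CH((Y\times Y')_{F(X)})\twoheadleftarrow\CH(Y\times Y'\times X)$ is the routine part; the actual content is the multiplicativity of multiplicity, which you assert without justification and should prove (or cite). Second, your fallback of ``invoking the theory of upper motives directly''---that $U_Y$ ``does not disappear'' over $F(X)$ so upper motives over $F(X)$ detect those over $F$---is not a proof: that detection statement is essentially the lemma you are trying to establish, and indeed the paper attributes the present argument to \cite{suffgen} as the input from that theory. Commit to the correspondence argument and drop the circular appeal.
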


\begin{proof}We borrow the proof from \cite[Proof of Proposition 2.4]{suffgen}. Consider two projective quasi-$G$-homogeneous varieties $Y$ and $Y'$ which dominate $X$. The motives $U_{Y_{F(X)}}$ and $U_{Y'_{F(X)}}$ are isomorphic if and only if the $F(X)$-varieties $Y_{F(X)}$ and $Y'_{F(X)}$ are $p$-equivalent. Since $Y$ dominates $X$, the variety $X_{F(Y)}$ has a closed point whose residue field $L/F(Y)$ is of degree coprime to $p$. The function field $L(X)$ of the $L$-variety $X_L$ is a purely transcendental extension of $L$, since the projective $G_L$-homogeneous variety $X_L$ is isotropic.

Now by assumption $Y_{F(X)}$ dominates $Y'_{F(X)}$, thus the variety $Y'$ has a $0$-cycle of degree coprime to $p$ over the free composite of the function fields $F(Y)$ and $F(X)$, hence over its extension $L(X)$. The homotopy invariance of Chow groups yields a $0$-cycle of degree coprime to $p$ on $Y'_L$, hence over $F(Y)$ and $Y'$ dominates $Y$. Replacing $Y$ by $Y'$, we get another multiplicity $1$ correspondence $Y'\rightsquigarrow Y$, and thus $Y$ and $Y'$ are $p$-equivalent.
\end{proof}

\section{General criterion of motivic equivalence}

Let $G$ be a semisimple algebraic group and $\Theta$ be an $\ast$-invariant subset of the Dynkin diagram of $G$ such that the variety $X_{\Theta,G}$ is isotropic. In the same way as in \cite[Section 2]{chermergil}, we denote by $G_{\Theta}$ the semisimple part of a Levi subgroup of a parabolic subgroup of $G$ of type $\Theta$.

\begin{lem}\label{upgtheta}
If $G$ and $G'$ are upper equivalent and $\Theta$ is a subset of the common Dynkin diagram of $G$ and $G'$, then $G_{\Theta}$ and $G'_{\Theta}$ are upper-equivalent.
\end{lem}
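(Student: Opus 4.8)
The plan is to prove that upper-equivalence of $G$ and $G'$ descends to the groups $G_\Theta$ and $G'_\Theta$ by comparing the standard upper motives of $G_\Theta$ with those of $G$. First I would record the basic structural fact (as in \cite[Section 2]{chermergil}) that projective $G_\Theta$-homogeneous varieties, and more generally projective quasi-$G_\Theta$-homogeneous varieties, are closely related to quasi-$G$-homogeneous varieties: for a $\ast$-invariant subset $\Xi$ of the Dynkin diagram of $G_\Theta$ (which embeds into $\Delta_G$), the variety $X_{\Xi, G_\Theta}$ appears, over a suitable extension, as the ``$\Xi$-part'' of the variety $X_{\Theta\cup\Xi, G}$ sitting over $X_{\Theta,G}$. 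Concretely, a parabolic of $G$ of type $\Theta\cup\Xi$ is contained in one of type $\Theta$, and the fibre of $X_{\Theta\cup\Xi,G}\to X_{\Theta,G}$ is a $G_\Theta$-homogeneous variety of type $\Xi$. The first step is therefore to make precise the dictionary between $\ast$-invariant subsets, quasi-homogeneous varieties, and upper motives on the $G_\Theta$ side versus the $G$ side, so that the set $U_{G_\Theta}$ of upper motives of $G_\Theta$ is identified with an explicit subset of $U_G$ (those upper motives $U_{X_{\Theta\cup\Xi,G}}$, equivalently those whose ``base point'' lies over $X_{\Theta,G}$).

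The second step is to translate the hypothesis. Since $G$ and $G'$ are upper-equivalent, for every subset $\Psi$ of the common Dynkin diagram the standard upper motives $U_{\Psi,G}$ and $U_{\Psi,G'}$ are isomorphic; equivalently, $X_{\Psi,G}$ and $X_{\Psi,G'}$ are $p$-equivalent. I would apply this with $\Psi = \Theta$ and with $\Psi = \Theta\cup\Xi$ for all $\ast$-invariant $\Xi$ in $\Delta_{G_\Theta}$. The key point is then a descent/isotropy argument: $X_{\Theta,G}$ is isotropic by assumption, and because $X_{\Theta,G}$ and $X_{\Theta,G'}$ are $p$-equivalent, $X_{\Theta,G'}$ is isotropic as well (having a $0$-cycle of degree prime to $p$, hence a rational point over a coprime-degree extension, which by the theory of projective homogeneous varieties forces actual isotropy for the relevant motivic purposes). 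So $G'_\Theta$ is defined and $\ast$-equivalent to $G_\Theta$. To compare their upper motives I would pass to the function field $F(X_{\Theta,G})$: over this field both $X_{\Theta,G}$ and $X_{\Theta,G'}$ become rational, and the motivic behaviour of $X_{\Xi,G_\Theta}$ (resp. $X_{\Xi,G'_\Theta}$) is governed by $X_{\Theta\cup\Xi,G}$ (resp. $X_{\Theta\cup\Xi,G'}$) over $F(X_{\Theta,G})$. Invoking Lemma \ref{tractable}, the extension $F(X_{\Theta,G})/F$ is tractable for $X_{\Theta,G}$, which lets me detect $p$-equivalence of the relevant quasi-$G$-homogeneous varieties upstairs from $p$-equivalence downstairs.

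Putting these together: given a $\ast$-invariant subset $\Xi$ of $\Delta_{G_\Theta}$, the varieties $X_{\Xi,G_\Theta}$ and $X_{\Xi,G'_\Theta}$ are $p$-equivalent over $F$ if and only if the corresponding quasi-$G$- and quasi-$G'$-homogeneous varieties over $F(X_{\Theta,G})$ are $p$-equivalent, and the latter follows from the $p$-equivalence of $X_{\Theta\cup\Xi,G}$ and $X_{\Theta\cup\Xi,G'}$ over $F$ (using that $F(X_{\Theta,G})$-points of $X_{\Theta,G'}$ exist up to coprime degree, so the function fields agree after a coprime extension, and $p$-equivalence is insensitive to such extensions). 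Hence $U_{\Xi,G_\Theta}\cong U_{\Xi,G'_\Theta}$ for all $\Xi$, i.e. $G_\Theta$ and $G'_\Theta$ are upper-equivalent. The main obstacle I anticipate is the first step — setting up the correspondence between upper motives of $G_\Theta$ and (a subclass of) upper motives of $G$ carefully enough that the tractability of $F(X_{\Theta,G})$ can be applied; in particular one must check that the upper motive of a projective quasi-$G_\Theta$-homogeneous variety genuinely coincides with the upper motive of the associated quasi-$G$-homogeneous variety over $F(X_{\Theta,G})$, which is where the structure theory of \cite{chermergil} and the Krull--Schmidt/upper-motive machinery of \cite{upper} do the real work.
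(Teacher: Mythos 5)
Your overall strategy --- relating projective $G_\Theta$-homogeneous varieties to projective $G$-homogeneous varieties via generic splitting and then feeding in the hypothesis for suitable subsets of $\Delta$ --- is the right one, and it is close in spirit to the paper's argument. But the proposal as written has two genuine gaps. First, your ``descent/isotropy'' step is false as stated: a $0$-cycle of degree coprime to $p$ on $X_{\Theta,G'}$ does \emph{not} force $X_{\Theta,G'}$ to be isotropic; that implication is exactly the property $(H_p)$ introduced later in the paper, and it fails in general (e.g.\ for $\PGL(A)$ with $\ind(A)$ divisible by two distinct primes). In the lemma's intended use both $G$ and $G'$ already contain $F$-defined parabolic subgroups of type $\Theta$, so the existence of $G'_\Theta$ should be taken as part of the hypotheses rather than derived from $p$-equivalence. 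Second, the tractability machinery does not do what you want here: Lemma \ref{tractable} compares two quasi-$G$-homogeneous varieties for the \emph{same} group $G$, so it applies neither to the cross-comparison of a quasi-$G$- with a quasi-$G'$-homogeneous variety (in Theorem \ref{gencrit} this is repaired by passing to $X_{\Theta,G}\times X_{\Theta,G'}$, a step you omit), nor to the $G_\Theta$-homogeneous varieties $X_{\Xi,G_\Theta}$, which are not quasi-$G$-homogeneous. Moreover the $p$-equivalence of $X_{\Theta\cup\Xi,G}$ and $X_{\Theta\cup\Xi,G'}$ over $F$ is already part of the hypothesis, so the detour through $F(X_{\Theta,G})$ gains nothing; what is actually missing is the ``dictionary'' identifying the upper motive of $X_{\Xi,G_\Theta}$ with that of the corresponding $G$-homogeneous variety over the base field $F$ --- precisely the step you defer as the ``main obstacle''. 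That step is the entire content of the lemma, so the proposal postpones rather than supplies the proof.

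For comparison, the paper's proof needs no tractability and no characteristic functions. For $\tilde\Theta\subseteq\Delta\setminus\Theta$ it sets $X=X_{\tilde\Theta,G_\Theta}$, $Y=X_{\tilde\Theta,G'_\Theta}$ and chases $0$-cycles directly: by Kersten--Rehmann, $F_{\tilde\Theta,G'}/F(Y)$ is purely transcendental (note: the paper works with $X_{\tilde\Theta,G}$, not $X_{\Theta\cup\tilde\Theta,G}$), so the hypothesis that $X_{\tilde\Theta,G'}$ dominates $X_{\tilde\Theta,G}$ produces a closed point of $(X_{\tilde\Theta,G})_{F(Y)}$ with residue field $L$ of degree coprime to $p$; over $L$ the variety $X_{\tilde\Theta,G_L}$ is isotropic, hence $L_{\tilde\Theta,G_L}/L$ is purely transcendental, and homotopy invariance of Chow groups pushes a coprime-to-$p$ $0$-cycle on $X$ down to $X_L$ and then to $X_{F(Y)}$; symmetry finishes. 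If you want to salvage your route, you should prove the dictionary in this explicit form (purely transcendental function field extensions plus homotopy invariance) rather than appealing to Lemma \ref{tractable}.
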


\begin{proof}
The groups $G_{\Theta}$ and $G'_{\Theta}$ are isomorphic over $F_{sep}$ with the same Dynkin diagram $\Delta\setminus \Theta$. Consider a subset $\tilde{\Theta}$ of $\Delta \setminus \Theta$, and denote by $X$ (resp. $Y$) the variety $X_{\tilde{\Theta},G_{\Theta}}$ (resp. $X_{\tilde{\Theta},G'_{\Theta}}$). By assumption the varieties $X_{\tilde{\Theta},G}$ and $X_{\tilde{\Theta},G'}$ are $p$-equivalent. Since by \cite[Theorem 3.18]{kerstreh} the field extension $F_{\tilde{\Theta},G'}/F(Y)$ is purely transcendental, $Y$ dominates $X_{\tilde{\Theta},G}$. The variety $(X_{\tilde{\Theta},G})_{F(Y)}$ has thus a closed point whose residue field $L/F(Y)$ is a field extension of degree coprime to $p$.

The $L$-variety $X_{\tilde{\Theta},G_L}$ is projective homogeneous and isotropic, thus the field extension $L_{\tilde{\Theta},G_L}/L$ is purely transcendental. The projective $G_{\Theta}$-homogeneous variety $X$ has a $0$-cycle of degree coprime to $p$ over the function field $L_{\tilde{\Theta},G_L}$. The homotopy invariance of Chow groups yields a $0$-cycle of degree coprime to $p$ on $X_L$, hence $Y$ dominates $X$. Applying the same procedure exchanging $X$ and $Y$, the varieties $X$ and $Y$ are $p$-equivalent. Since the subset $\tilde{\Theta}$ of $\Delta \setminus \Theta$ is arbitrary, the algebraic groups $G_{\Theta}$ and $G'_{\Theta}$ are upper-equivalent.
\end{proof}

\begin{theo}\label{gencrit}Two semisimple and $p$-inner algebraic groups are motivic equivalent modulo $p$ if and only if they are upper-equivalent.
\end{theo}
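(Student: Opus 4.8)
The plan is to establish the nontrivial implication — upper-equivalence implies motivic equivalence modulo $p$ — by feeding Proposition~\ref{calc} into a double induction; the converse is immediate, since $U_{\Theta,G}$ is by construction the upper motive of $M_{\Theta,G}$, so $M_{\Theta,G}\simeq M_{\Theta,G'}$ for all $\Theta$ forces $U_{\Theta,G}\simeq U_{\Theta,G'}$ for all $\Theta$, which is equivalent to upper-equivalence by the characterisation recalled in Section~1. Assume then that $G$ and $G'$ are upper-equivalent. Since $\CM_G(F;\mathbb{F}_p)=\CM_{G'}(F;\mathbb{F}_p)$ and, as recalled above, a motive in this category is determined up to isomorphism by its characteristic function, it is enough to check $\Phi_{M_{\Theta,G}}=\Phi_{M_{\Theta,G'}}$ for every $\Theta$. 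I would induct on $n(G):=\dim X_{\Delta_G,G}$, the statement being read over an arbitrary base field; for $n(G)=0$ the group $G$ is split, every standard motive is a sum of Tate motives, and the two characteristic functions agree because $M_{\Theta,G}$ and $M_{\Theta,G'}$ become isomorphic over $F_{sep}$. We may also assume $\Theta\neq\emptyset$ (else the standard motive is the Tate motive) and $\Theta$ $\ast$-invariant, replacing $F$ by the minimal subextension of the inner-splitting field making it so and invoking additivity of $\CCor$.

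For the inductive step I distinguish two cases according to whether $X_{\Theta,G}$ carries a zero-cycle of degree prime to $p$; by upper-equivalence this holds for $X_{\Theta,G}$ exactly when it holds for $X_{\Theta,G'}$. Suppose it does. Then there is a finite extension $E/F$ of degree prime to $p$ over which both $X_{\Theta,G}$ and $X_{\Theta,G'}$ are isotropic, and over $E$ the isotropic motivic decomposition of \cite[Section~2]{chermergil} writes $M_{\Theta,G_E}$ and $M_{\Theta,G'_E}$ as one and the same direct sum of shifts of standard motives $M_{\Xi,(G_\Theta)_E}$, resp.\ $M_{\Xi,(G'_\Theta)_E}$, of the Levi subgroups. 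By Lemma~\ref{upgtheta} (applied over $E$, upper-equivalence being stable under base change) the groups $(G_\Theta)_E$ and $(G'_\Theta)_E$ are upper-equivalent, with common Dynkin diagram $\Delta\setminus\Theta$, so $n((G_\Theta)_E)<n(G)$ and the induction hypothesis gives $M_{\Xi,(G_\Theta)_E}\simeq M_{\Xi,(G'_\Theta)_E}$ for all $\Xi$, hence $M_{\Theta,G_E}\simeq M_{\Theta,G'_E}$. Since restriction along the prime-to-$p$ extension $E/F$ is conservative on $\CM_G(F;\mathbb{F}_p)$ (a transfer argument, cf.\ \cite{outer}), this yields $M_{\Theta,G}\simeq M_{\Theta,G'}$.

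If on the other hand $X_{\Theta,G}$ has no zero-cycle of degree prime to $p$, set $L=F_{\Theta,G}$; by Lemma~\ref{tractable} the extension $L/F$ is tractable for $X_{\Theta,G}$, hence also for $X_{\Theta,G'}$ by the remark that tractability is shared among upper-equivalent groups. I now run an inner induction on $i$. Applying Proposition~\ref{calc} over the common field $L$ to $G$, and to $G'$ with a quasi-$G'$-homogeneous variety having upper motive $U_Y$ (thus $p$-equivalent to $Y$, hence dominating $X_{\Theta,G'}$ since $X_{\Theta,G}$ does, and with base change to $L$ of upper motive $U_{Y_L}$), and subtracting, the difference $\Phi_{M_{\Theta,G}}(U_Y,i)-\Phi_{M_{\Theta,G'}}(U_Y,i)$ equals
$$\bigl(\Phi_{M_{\Theta,G_L}}(U_{Y_L},i)-\Phi_{M_{\Theta,G'_L}}(U_{Y_L},i)\bigr)-\bigl(\Phi_{(M_{\Theta,G}^{<i})_L}(U_{Y_L},i)-\Phi_{(M_{\Theta,G'}^{<i})_L}(U_{Y_L},i)\bigr),$$
it being enough to treat $Y$ dominating $X_{\Theta,G}$ since otherwise both characteristic functions vanish. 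The second bracket is zero by the inner induction, for then $M_{\Theta,G}^{<i}\simeq M_{\Theta,G'}^{<i}$ over $F$, hence over $L$; the first is zero because over $L$ the variety $X_{\Theta,G}$ has a rational point and $X_{\Theta,G'}$ a zero-cycle of degree prime to $p$, so the previous case applied over $L$ to $(G_L,G'_L)$ — which invokes only the induction on $n(\cdot)$ — gives $M_{\Theta,G_L}\simeq M_{\Theta,G'_L}$. This completes the two inductions.

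The step I expect to be most delicate is precisely this interaction of the two cases across the gap between possessing a rational point and possessing only a zero-cycle of degree prime to $p$: over the natural field $L=F_{\Theta,G}$ the varieties $X_{\Theta,G}$ and $X_{\Theta,G'}$ need not be simultaneously isotropic, only simultaneously split by a prime-to-$p$ extension, so the whole argument must be carried out in the $\mathbb{F}_p$-linear category and rely on the stability, under extensions of degree prime to $p$, of the isotropic motivic decomposition, of the conclusion of Lemma~\ref{upgtheta}, and of $p$-equivalence of quasi-homogeneous varieties.
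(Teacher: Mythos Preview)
Your overall architecture matches the paper's: induct on the size of the group, use the Chernousov--Gille--Merkurjev decomposition together with Lemma~\ref{upgtheta} for the isotropic case, feed Proposition~\ref{calc} with a tractable extension for the general $\ast$-invariant case, and reduce arbitrary $\Theta$ to the $\ast$-invariant case by corestriction. Your inner induction on $i$ is equivalent to the paper's ``least $i_0$'' contradiction.

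The substantive difference is your case split and your choice of $L$. You first treat the case where $X_{\Theta,G}$ carries a zero-cycle of degree prime to $p$, pass to a prime-to-$p$ extension $E/F$ making both varieties isotropic, and then \emph{descend} the isomorphism $M_{\Theta,G_E}\simeq M_{\Theta,G'_E}$ to $F$. In your second case you take $L=F_{\Theta,G}$ alone; over $L$ the variety $X_{\Theta,G'}$ only has a prime-to-$p$ zero-cycle, so you invoke the first case over $L$. The descent step is therefore load-bearing throughout.

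This descent is where the gap sits. A transfer argument gives conservativity in the technical sense: if a \emph{given} morphism becomes an isomorphism over $E$ then it already was one over $F$. What you need is stronger, namely that $\res_{E/F}$ reflects \emph{isomorphism classes} in $\CM_G(F;\mathbb{F}_p)$. Via Krull--Schmidt this amounts to knowing that $(U_Y)_E$ stays indecomposable for every quasi-$G$-homogeneous $Y$, together with injectivity of $Y\mapsto Y_E$ on $p$-equivalence classes. The latter is an easy pushforward; the former is not a formal consequence of the transfer and is not what \cite{outer} provides.

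The paper avoids this issue with one move: instead of $L=F_{\Theta,G}$ it takes the free composite $L=F_{\Theta,G}\cdot F_{\Theta,G'}$. Over this $L$ both $X_{\Theta,G}$ and $X_{\Theta,G'}$ have honest rational points, so the isotropic case (your case with $E=F$) applies directly over $L$ and yields $M_{\Theta,G_L}\simeq M_{\Theta,G'_L}$ with no prime-to-$p$ descent. One checks that this $L$ is tractable for $X_{\Theta,G}$ by viewing it as the function field of the $G\times G'$-homogeneous variety $X_{\Theta,G}\times X_{\Theta,G'}$ and applying Lemma~\ref{tractable}; Proposition~\ref{calc} then concludes exactly as in your argument. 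Replacing your $L$ by the composite closes the gap and collapses your two cases into the paper's single pass.
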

\begin{proof}
The fact that two motivic equivalent algebraic groups $G$ and $G'$ modulo $p$ are upper motivic-equivalent follows from the Krull-Schmidt property of quasi-homogeneous varieties. We prove the converse for all semisimple algebraic groups over any field by induction on the common rank of $G$ and $G'$, i.e. the number of vertices of their common Dynkin diagram.

The base of the induction where the rank is $0$ is clear. Now assume that $rk(G)>0$ and consider a subset $\Theta$ of the common Dynkin diagram $\Delta$ of $G$ and $G'$. Excluding the trivial case where $\Theta$ is empty, we prove the result in three steps : we first prove the result in the case where $G$ and $G'$ both contain an $F$-defined parabolic subgroup of type $\Theta$. We then deal with the case where the subset $\Theta$ is $\ast$-invariant for both $G$ and $G'$, before treating the general case.

According to \cite[Theorem 7.5]{chermergil} (see also \cite{brosnan}), the motive of $X_{\Theta,G}$ decomposes into a direct sum of shifts of projective $G_{\Theta}$-varieties as long as $G$ contains parabolic subgroup of type $\Theta$. Moreover the shifts as well as the types of varieties appearing in this decomposition are determined by the data of the subset $\Theta$ and the $\ast$-action on the Dynkin diagram of $G$. The motive of $X_{\Theta,G'}$ also decomposes in the same way as a direct sum of shifts of projective $G'_{\Theta}$-homogeneous varieties, since $X_{\Theta,G'}$ is isotropic. The algebraic groups $G_{\Theta}$ and $G'_{\Theta}$ are upper equivalent by lemma \ref{upgtheta} and $rk(G_{\Theta})<rk(G)$. Those two decomposition coincide and thus the motives of $X_{\Theta,G}$ and $X_{\Theta,G'}$ are isomorphic by the induction hypothesis.

We now show how the result in the case where $\Theta$ is $\ast$-invariant for both $G$ and $G'$. First, observe that the free composite $L=F_{\Theta,G}.F_{\Theta,G'}$ of the function fields of the varieties $X_{\Theta,G}$ and $X_{\Theta,G'}$ is a tractable field extension for $X_{\Theta,G}$. Indeed, let $Y$ and $Y'$ be two quasi-$G$-homogeneous varieties which dominate $X_{\Theta,G}$, and such that $U_{Y_L}$ and $U_{Y'_L}$ are isomorphic. Since $G$ and $G'$ are upper equivalent, $Y$ and $Y'$ also dominate $X_{\Theta,G'}$, and through the external product of Chow groups the varieties $Y$ and $Y'$ both dominate the projective $G\times G'$-homogeneous variety $X_{\Theta,G}\times X_{\Theta,G'}$. Since the field extension $L$ corresponds to the function field of the $F$-variety $X_{\Theta,G}\times X_{\Theta,G'}$, the motives $U_Y$ and $U_{Y'}$ are isomorphic by Lemma \ref{tractable}.

Assume that the motives $M_{\Theta,G}$ and $M_{\Theta,G'}$ are not isomorphic. The characteristic function of those motives differ, thus we may choose the least integer $i_0$ such that the number of indecomposable summands of $M_{\Theta,G}$ and $M_{\Theta,G'}$ isomorphic to $U_Y$ differ, for some variety $Y$. The algebraic groups $G$ and $G'$ both contain a parabolic subgroup of type $\Theta$ over the free composite $F_{\Theta,G}.F_{\Theta,G'}$, thus by the previous discussion the motives of $X_{\Theta,G_L}$ and $X_{\Theta,G'_L}$ are isomorphic. It follows by Proposition \ref{calc} that
\begin{align}
\nonumber\Phi_{M_{\Theta,G}}(U_Y,i_0)&=\Phi_{(M_{\Theta,G})_L}(U_{Y_L},i_0)-\Phi_{(M_{\Theta,G}^{<i_0})_L}(U_{Y_L},i_0)\\
\nonumber&=\Phi_{(M_{\Theta,G'})_L}(U_{Y_L},i_0)-\Phi_{(M_{\Theta,G'}^{<i_0})_L}(U_{Y_L},i_0)\\
\nonumber&=\Phi_{M_{\Theta,G'}}(U_Y,i_0).
\end{align}
which contradicts the definition of $i_0$. In particular $\Phi_{M_{\Theta,G}}=\Phi_{M_{\Theta,G'}}$ and the motives of $X_{\Theta,G}$ and $X_{\Theta,G'}$ are isomorphic.

Finally if $\Theta$ is an arbitrary subset of the common Dynkin diagram of $G$ and $G'$, take a minimal field extension $K/F$ such that $\Theta$ is $\ast$-invariant for $G_K$. As $G$ and $G'$ are $\ast$-equivalent, $K$ is also a minimal field extension such that $\Theta$ is $\ast$-invariant for $G'_K$. The previous discussion shows that the motives of the $K$-varieties $M_{\Theta,G_K}$ and $M_{\Theta,G'_K}$ are isomorphic. Since $M_{\Theta,G}$ and $M_{\Theta,G'}$ are the images of those motives through the corestriction functor $\CCor_{K/L}$, the algebraic groups $G$ and $G'$ are motivic equivalent modulo $p$. 
\end{proof}

\section{Upper motives and the higher Tits indices}

The following result of Vishik shows that the motive of quadrics is essentially determined by the splitting behaviour of the associated quadratic form (see \cite[Criterion 0.1]{criteria} for a proof available in all characteristic different from $2$).

\begin{crit}[{\cite[Thm 4.18]{vish},\cite[Thm 1.4.1]{vish4}}] Let $q$ and $q'$ be quadratic forms over $F$ of the same dimension. The motives $M(Q)$ and $M(Q')$ with coefficients in $\mathbb{Z}$ of the associated quadrics are isomorphic if and only if for any field extension $E/F$, the Witt indices of $q_E$ and $q'_E$ coincide.
\end{crit}

We will now relate the notion of motivic equivalence and the splitting behaviour of algebraic groups. The set of the higher Tits indices are classical invariants which control the splitting of semisimple algebraic groups. For any semisimple algebraic $G$, the Tits index of $G$ is given by the data of the Dynkin diagram of $G$, a fixed subset of its vertices which is shaded, and the so-called $\ast$-action of the absolute Galois group (see \cite{tits}). A subset $\Theta$ is shaded in the Tits index of $G$ if and only if the type the parabolic subgroups of type $\Theta$ of $G$ are defined over the base field. Denoting by $Fields/F$ the category of the field extensions of $F$, the higher Tits index of a semisimple algebraic group $G$ is the functor $\mathrm{Tits}(G):Fields/F\longrightarrow Set$ which maps a field extension $E/F$ to the Tits index of $G_E$.

We define the $p$-local version of the Tits index as follows. The Tits $p$-index of $G$ consists of the data of the Dynkin diagram of $G$ endowed with the $\ast$-action of the absolute Galois group of $F$, and a fixed subset of its vertices which is shaded. A subset $\Theta$ is shaded in this Dynkin diagram if the variety $X_{\Theta,G}$ has a zero-cycle of degree coprime to $p$. The higher Tits $p$-index of $G$ is the functor $\mathrm{Tits}_p(G):Fields/F\longrightarrow Set$ which maps a field extension $E/F$ to the Tits $p$-index of $G_E$.

Even though at a first glance the fact that the motivic equivalence of algebraic groups is defined for motives with coefficients in $\mathbb{F}_p$ can be seen as a weakness, we will see that it is quite the opposite. The deep connections between motives and the higher Tits indices provided in the sequel don't hold for motives with coefficients in $\mathbb{Z}$.

In some sense these results show that motives with coefficients in $\mathbb{F}_p$ behave much better than motives with coefficients in $\mathbb{Z}$, notably in the study of the rational geometry of projective homogeneous varieties. An example of this difference is given by Karpenko's criterion of motivic equivalence of Severi-Brauer varieties \cite[Criterion 7.1]{criteria}, which can be seen as an evidence of the fact that the isomorphism classes of motives with coefficients in $\mathbb{Z}$ is not a birational invariant of Severi-Brauer varieties (compare with Corollary \ref{stabrat}).

\begin{df}
Let $p$ be a prime and $G$ be a semisimple algebraic group. We say that $G$ satisfies the property $(H_p)$ if $G$ is $p$-inner and if for any projective $G$-homogeneous variety $X$ and any field extension $E/F$, the variety $X_E$ is isotropic if and only if it has a $0$-cycle of degree coprime to $p$.
\end{df}

For instance, an absolutely simple adjoint algebraic group of inner type $A_n$ satisfies $(H_p)$ if and only if it is isomorphic to $\PGL(A)$, for some central simple algebra $A$ whose index is a power of $p$. If $q$ is a quadratic form defined over $F$, the special orthogonal group $\SO(q)$ of $q$ satisfies $(H_2)$ by Springer's theorem.

\begin{theo}\label{critpkern}
Two semisimple $p$-inner algebraic groups $G$ and $G'$ are motivic equivalent modulo $p$ if and only if the higher Tits $p$-indices of $G$ and $G'$ are equal.
\end{theo}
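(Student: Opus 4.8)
The plan is to reduce everything to Theorem \ref{gencrit} and to the characterisation of domination hidden in the proof of Lemma \ref{tractable}. By Theorem \ref{gencrit}, $G$ and $G'$ are motivic equivalent modulo $p$ if and only if they are upper-equivalent, so it suffices to prove that two $\ast$-equivalent $p$-inner groups are upper-equivalent if and only if their higher Tits $p$-indices coincide. I will use the following standard facts: (i) a geometrically connected projective homogeneous variety $Z$ over a field $K$ has a $0$-cycle of degree prime to $p$ if and only if the Tate motive $\mathbb{F}_p$ is a direct summand of $M(Z)$, equivalently $U_Z\cong\mathbb{F}_p$; (ii) for two geometrically split varieties satisfying the nilpotence principle, $Y$ dominates $X$ if and only if $X_{F(Y)}$ carries a $0$-cycle of degree prime to $p$ (this is exactly what the proof of Lemma \ref{tractable} uses), so that $p$-equivalence amounts to isomorphism of upper motives and is preserved under arbitrary base change; and (iii) since $G$ and $G'$ are $\ast$-equivalent $p$-inner forms, the Dynkin diagrams with $\ast$-action underlying $\mathrm{Tits}_p(G)(E)$ and $\mathrm{Tits}_p(G')(E)$ coincide for every $E/F$, so that the equality of the higher Tits $p$-indices means precisely that for every $E/F$ and every $\Theta$ which is $\ast$-invariant over $E$, the variety $X_{\Theta,G_E}$ has a $0$-cycle of degree prime to $p$ if and only if $X_{\Theta,G'_E}$ does.

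Assume first that $G$ and $G'$ are upper-equivalent, and fix $E/F$ together with a subset $\Theta$ which is $\ast$-invariant over $E$; then the minimal extension $L/F$ over which $\Theta$ is $\ast$-invariant embeds into $E$. Upper-equivalence gives $U_{\Theta,G}\cong U_{\Theta,G'}$; base changing this isomorphism to $E$ and invoking the compatibility of upper motives of corestrictions with base change from \cite{outer} — which identifies an indecomposable summand of $(U_{\Theta,G})_E$ with the upper motive of the $E$-variety $X_{\Theta,G_E}$ — one obtains $U_{X_{\Theta,G_E}}\cong U_{X_{\Theta,G'_E}}$. By fact (i) this forces $X_{\Theta,G_E}$ to have a $0$-cycle of degree prime to $p$ exactly when $X_{\Theta,G'_E}$ does, i.e. $\Theta$ is shaded in $\mathrm{Tits}_p(G)(E)$ if and only if it is shaded in $\mathrm{Tits}_p(G')(E)$. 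As $E$ and $\Theta$ were arbitrary, the higher Tits $p$-indices agree.

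Conversely, assume the higher Tits $p$-indices of $G$ and $G'$ agree; by Theorem \ref{gencrit} it is enough to show that $G$ and $G'$ are upper-equivalent, i.e. that $U_{\Theta,G}\cong U_{\Theta,G'}$ for every $\Theta$. Fix $\Theta$, let $L/F$ be a minimal extension over which $\Theta$ is $\ast$-invariant — its degree is a power of $p$ because $G$ is $p$-inner — and put $V=\CCor_{L/F}(X_{\Theta,G_L})$ and $V'=\CCor_{L/F}(X_{\Theta,G'_L})$, so $U_{\Theta,G}=U_V$ and $U_{\Theta,G'}=U_{V'}$. Evaluate the higher Tits $p$-indices at the function field $E_1=F(V)=L(X_{\Theta,G_L})$: the $E_1$-variety $X_{\Theta,G_{E_1}}$ is isotropic, so $\Theta$ is shaded in $\mathrm{Tits}_p(G)(E_1)$, hence, by hypothesis, in $\mathrm{Tits}_p(G')(E_1)$; thus $X_{\Theta,G'_{E_1}}$ carries a $0$-cycle of degree prime to $p$. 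Since $L$ embeds into $E_1$, the étale algebra $L\otimes_F E_1$ has $E_1$ as a direct factor, so $X_{\Theta,G'_{E_1}}$ occurs as a connected component of $V'_{E_1}=V'\times_F E_1$; hence $V'$ carries a $0$-cycle of degree prime to $p$ over $F(V)$, which by fact (ii) means that $V$ dominates $V'$. Exchanging the roles of $G$ and $G'$ and evaluating at $F(V')$ gives the reverse domination, so $V$ and $V'$ are $p$-equivalent, i.e. $U_{\Theta,G}\cong U_{\Theta,G'}$. Since $\Theta$ was arbitrary, $G$ and $G'$ are upper-equivalent, and Theorem \ref{gencrit} yields that they are motivic equivalent modulo $p$.

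The only delicate point is the bookkeeping around subsets $\Theta$ that are not $\ast$-invariant over the base field: one has to pass to a minimal extension $L$ of $p$-power degree over which $\Theta$ becomes $\ast$-invariant, control the behaviour of the corestriction $M_{\Theta,G}=\CCor_{L/F}(X_{\Theta,G_L})$ under base change, and invoke the fact — furnished by the theory of upper motives of corestrictions in \cite{outer} — that the upper motive over $F$ of such a corestriction records every Galois conjugate of the upper motive of $X_{\Theta,G_L}$, so that an isomorphism between two such corestriction upper motives is detected on a single conjugate after base change. Once this is granted, the conceptual content of the argument is simply that $\Theta$ is automatically shaded in the higher Tits $p$-index over the function field $F(X_{\Theta,G})$, which, via the domination criterion of Lemma \ref{tractable}, pins down the $p$-equivalence of $X_{\Theta,G}$ and $X_{\Theta,G'}$ — the exact analogue, for arbitrary semisimple groups, of Vishik's criterion for quadratic forms recalled above.
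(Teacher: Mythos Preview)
Your proof is correct and follows the paper's overall strategy: reduce to Theorem~\ref{gencrit} and establish that upper-equivalence is equivalent to equality of higher Tits $p$-indices. For the implication ``equal Tits $p$-indices $\Rightarrow$ upper-equivalent'' you argue exactly as the paper does, by evaluating the Tits $p$-index at the function field of $X_{\Theta,G}$ (or its corestriction) and reading off the required domination; your treatment of non-$\ast$-invariant $\Theta$ via corestrictions is in fact more explicit than the paper's.

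For the converse implication your route differs. The paper argues elementarily: given that $\Theta_0$ is shaded for $G$ over $E$, pass to a coprime-to-$p$ extension $L/E$ over which $X_{\Theta_0,G}$ is isotropic; then use upper-equivalence to place a prime-to-$p$ $0$-cycle on $X_{\Theta_0,G'}$ over $F_{\Theta_0,G}$, push this up to $L_{\Theta_0,G}$, and descend to $L$ (hence to $E$) because $L_{\Theta_0,G}/L$ is purely transcendental. You instead base-change the isomorphism $U_{\Theta,G}\cong U_{\Theta,G'}$ to $E$ and invoke a compatibility of upper motives of corestrictions with scalar extension, attributed to \cite{outer}, to extract $U_{X_{\Theta,G_E}}\cong U_{X_{\Theta,G'_E}}$. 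This works, but it packages the real content --- that the Galois-conjugate components of $V_E$ all have the same upper motive, so that the degree-zero summand of $(U_{\Theta,G})_E$ is unambiguous --- into a citation. The paper's purely-transcendental-extension trick gives the same conclusion without needing that structural fact about corestrictions, at the cost of a small detour through the auxiliary extension $L$; your approach is cleaner to state but leans on heavier machinery.
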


\begin{proof}
If the higher Tits indices coincide over any field extension, then a projective $G$-homogeneous variety of type $\Theta$ has a $0$-cycle of degree coprime to $p$ over the function field of a projective $G'$-homogeneous variety of the same type, and vice-versa. The varieties $X_{\Theta,G}$ and $X_{\Theta,G'}$ are thus $p$-equivalent, and in particular $G$ and $G'$ are upper equivalent, hence motivic equivalent modulo $p$ by Theorem \ref{gencrit}.

For the converse, fix an arbitrary field extension $E/F$ and denote $\mathrm{Tits}_p(G)(E)$ by $\Theta_0$ (resp. $\mathrm{Tits}_p(G')(E)$ by $\Theta_0'$). The variety $X_{\Theta_0,G}$ is isotropic over a residue field $L$ which is a coprime to $p$ field extension of $E$. Since $X_{\Theta_0,G'}$ has a $0$-cycle of degree coprime to $p$ over the function field $F_{\Theta_0,G}$, it is also the case over the function field $L_{\Theta_0,G}$, hence over $L$ since the field extension $L_{\Theta_0,G}/L$ is purely transcendental. The variety $X_{\Theta_0,G'}$ has thus a zero-cycle of degree coprime to $p$ over $E$. The subset $\Theta_0$ is thus shaded in the Tits $p$-index of $G'_E$, i.e. $\Theta_0$ is a subset of $\Theta_0'$. Exchanging the roles of $G$ and $G'$ yields the inclusion in the other direction, therefore the Tits $p$-indices of $G_E$ and $G'_E$ coincide.
\end{proof}

\begin{cor}\label{critpsp}
Assume that $G$ and $G'$ are $p$-inner and satisfy the property $(H_p)$. Then $G$ and $G'$ are motivic equivalent if and only if the higher Tits indices of $G$ and $G'$ are equal.
\end{cor}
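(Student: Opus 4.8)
The plan is to derive the statement from Theorem~\ref{critpkern}, the point being that the hypothesis $(H_p)$ identifies the higher Tits $p$-index with the higher Tits index. First I would note that for a $p$-inner group $G$ satisfying $(H_p)$ the two coincide: for any $E/F$ and any $\ast$-invariant $\Theta$, the variety $X_{\Theta,G_E}$ is isotropic if and only if it carries a $0$-cycle of degree coprime to $p$, and the former is precisely the condition for $\Theta$ to be shaded in $\mathrm{Tits}(G_E)$ while the latter is the condition for $\Theta$ to be shaded in $\mathrm{Tits}_p(G_E)$; the same holds for $G'$. Hence the higher Tits indices of $G$ and $G'$ agree if and only if their higher Tits $p$-indices agree, and by Theorem~\ref{critpkern} this is equivalent to $G$ and $G'$ being motivic equivalent modulo $p$. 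This already yields the implication from motivic equivalence to equality of the higher Tits indices, and reduces the converse to proving that $G$ and $G'$ are motivic equivalent modulo every prime $q\neq p$ as well.

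Fix such a $q$. The crucial observation is that $(H_p)$ forces $G$ and $G'$ to split as much as possible away from $p$. Let $F^{(q)}$ be a maximal prime-to-$q$ extension of $F$: every finite subextension has $q$-power degree, so every closed point of an $F^{(q)}$-variety has degree a power of $q$ and every projective $G$-homogeneous variety over $F^{(q)}$ carries a $0$-cycle of degree coprime to $p$. Since $(H_p)$ passes to $F^{(q)}$, all these varieties are isotropic over $F^{(q)}$, i.e. $G_{F^{(q)}}$ is quasi-split, and likewise $G'_{F^{(q)}}$; equivalently every quasi-$G$-homogeneous variety already has a $0$-cycle of degree prime to $q$ over $F$, every Tits algebra of $G$ has index prime to $q$, and $G$, $G'$ become isomorphic to their common quasi-split form over $F^{(q)}$. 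I would then argue that with $\mathbb{F}_q$-coefficients the standard motive $M_{\Theta,G}$ is combinatorial, i.e. determined by the common Dynkin diagram, the subset $\Theta$ and the common $\ast$-action, the Tits algebras being invisible modulo $q$. Concretely, after reducing to $\ast$-invariant $\Theta$ as in the proof of Theorem~\ref{gencrit}, I would pass to the finite $p$-power extension $E$ over which the group becomes inner, apply Theorem~\ref{critpkern} modulo $q$ over $E$ — legitimate because $G_E$ and $G'_E$ are inner hence $q$-inner, and by the previous paragraph their higher Tits $q$-indices are both totally shaded — obtaining $M_{\Theta,G_E}\cong M_{\Theta,G'_E}$, and finally descend this isomorphism to the base.

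The main obstacle is exactly this descent. An isomorphism of geometrically split $\mathbb{F}_q$-motives is not, in general, detected after a prime-to-$q$ extension — the trivial Tate motive and the nontrivial Artin motive of a quadratic extension become isomorphic over that extension but not over the base — so faithfully flat descent is unavailable. One must instead exploit that, modulo $q$ and under our hypotheses, the motives at hand are genuinely combinatorial: their indecomposable summands are Tate twists of Artin motives attached to the subextensions through which the $\ast$-action factors, with multiplicities governed by Weyl-group combinatorics and the $\ast$-action alone, and these data are shared by $G$ and $G'$. Establishing this description — equivalently, that a Tits algebra of $q$-coprime index contributes nothing to the $\mathbb{F}_q$-motive of a projective homogeneous variety — is where the substance of the argument lies; granting it, $M_{\Theta,G}\cong M_{\Theta,G'}$ modulo $q$ follows for $\ast$-invariant $\Theta$ by direct comparison and for arbitrary $\Theta$ by corestriction, completing the proof.
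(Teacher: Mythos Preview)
The paper states this corollary without proof, treating it as immediate from Theorem~\ref{critpkern}. Your first paragraph is exactly the intended argument: under $(H_p)$, a projective $G$-homogeneous variety over any extension is isotropic if and only if it carries a $0$-cycle of degree prime to $p$, so $\mathrm{Tits}(G)=\mathrm{Tits}_p(G)$ and likewise for $G'$; Theorem~\ref{critpkern} then converts equality of higher Tits indices into motivic equivalence modulo $p$, and conversely. That is all the paper has in mind.

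Where you diverge from the paper is in taking the phrase ``motivic equivalent'' literally, as equivalence modulo every prime $q$, and then embarking on a substantial programme to handle $q\neq p$. The paper does not do this. If you examine how the corollary is invoked---for instance, the proof of Criterion~\ref{critan} for orthogonal groups appeals only to Theorem~\ref{gencrit}, which is a mod-$p$ statement, and the surrounding discussion explicitly says ``the only interesting prime is thus $p=2$''---you will see that the paper is using ``motivic equivalent'' loosely, effectively meaning ``motivic equivalent modulo $p$'' for the relevant $p$. The statement as literally written would require the extra work you outline, and the paper simply does not supply it.

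Your analysis of the $q\neq p$ case is thoughtful and your identification of the obstacle is accurate: coprime-to-$q$ base change does \emph{not} detect isomorphism of $\mathbb{F}_q$-motives (already for Artin motives one can write down $\mathbb{F}_q[\Gamma]$-modules that become isomorphic after restriction to a subgroup of index prime to $q$ without being isomorphic), so the descent step genuinely needs the combinatorial description you sketch. But you should recognise that this is going well beyond the paper. (Incidentally, there is a slip in your description of $F^{(q)}$: the finite subextensions of a maximal prime-to-$q$ extension have degree \emph{prime to $q$}, not $q$-power degree; it is the finite extensions \emph{of} $F^{(q)}$ that have $q$-power degree, which is what makes every closed point over $F^{(q)}$ have degree a power of $q$.) For the purposes of matching the paper, the correct reading is the mod-$p$ one, and your first paragraph suffices.
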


We will provide the higher Tits (p)-indices for the different algebraic groups of classical type in order to give complete criteria for motivic equivalence of these groups in terms of the underlying algebraic structures.

\section{Motivic equivalence for algebraic groups of inner type $A_n$}

An absolutely simple adjoint algebraic group $G$ of inner type $A_n$ is isomorphic to the projective linear group $\PGL(A)$ of a central simple algebra $A$ of degree $n+1$. Let $\Theta=\{\alpha_{i_1},...,\alpha_{i_k}\}$ be a fixed subset of the Dynkin diagram of $G$. Any projective $G$-homogeneous variety of type $\Theta$ is isomorphic to the variety $X(i_1,...,i_k;A)$ of flags of right ideals of reduced dimension $i_1,...,i_k$ in $A$.

The description of the Tits index of $G$ is given in \cite{tits}. Fixing a prime $p$, we denote by $d_p$ the $p$-adic valuation of the index of $A$, the Tits $p$-index of $\PGL(A)$ is given by

\begin{center}
  \begin{tikzpicture}[scale=.4]
    \draw (-1,0) node[anchor=east]  {$A_n$};
\draw (0.8,0.9) node[anchor=east]  {\tiny{$\alpha_{{}_{1}}$}};
\draw (2.8,0.9) node[anchor=east]  {\tiny{$\alpha_{{}_{2}}$}};
\draw (6.8,0.9) node[anchor=east]  {\tiny{$\alpha_{{}_{d_p}}$}};
\draw (12.8,0.9) node[anchor=east]  {\tiny{$\alpha_{{}_{2d_p}}$}};
\draw (19.8,0.9) node[anchor=east]  {\tiny{$\alpha_{{}_{rd_p}}$}};
\draw (25.8,0.9) node[anchor=east]  {\tiny{$\alpha_{{}_{n}}$}};
    \draw[xshift=0 cm,thick] (0 cm,0) circle (.3cm);
\draw[xshift=0.15 cm,thick] (0.15 cm,0) -- +(1.4 cm,0);
    \draw[xshift=1 cm,thick] (1 cm,0) circle (.3cm);
\draw[xshift=1 cm,dotted,thick] (1.6 cm,0) -- +(0.9 cm,0);
    \draw[xshift=2 cm,thick] (2 cm,0) circle (.3cm);
\draw[xshift=2.15 cm,thick] (2.15 cm,0) -- +(1.4 cm,0);
    \draw[xshift=3 cm,thick,fill=black] (3 cm,0) circle (.3cm);
\draw[xshift=3.15 cm,thick] (3.15 cm,0) -- +(1.4 cm,0);
    \draw[xshift=4 cm,thick] (4 cm,0) circle (.3cm);
\draw[xshift=4 cm,dotted,thick] (4.6 cm,0) -- +(0.9 cm,0);
    \draw[xshift=5 cm,thick] (5 cm,0) circle (.3cm);
\draw[xshift=5.15 cm,thick] (5.15 cm,0) -- +(1.4 cm,0);
    \draw[xshift=6 cm,thick,fill=black] (6 cm,0) circle (.3cm);
\draw[xshift=6.15 cm,thick] (6.15 cm,0) -- +(1.4 cm,0);
    \draw[xshift=7 cm,thick] (7 cm,0) circle (.3cm);
\draw[xshift=7 cm,dotted,thick] (7.6 cm,0) -- +(1.8 cm,0);
    \draw[xshift=8.5 cm,thick] (8.5 cm,0) circle (.3cm);
\draw[xshift=8.65 cm,thick] (8.65 cm,0) -- +(1.4 cm,0);
    \draw[xshift=9.5 cm,thick,fill=black] (9.5 cm,0) circle (.3cm);
\draw[xshift=9.65 cm,thick] (9.65 cm,0) -- +(1.4 cm,0);
    \draw[xshift=10.5 cm,thick] (10.5 cm,0) circle (.3cm);
\draw[xshift=10.5 cm,dotted,thick] (11.1 cm,0) -- +(0.9 cm,0);
    \draw[xshift=11.5 cm,thick] (11.5 cm,0) circle (.3cm);
\draw[xshift=11.65 cm,thick] (11.65 cm,0) -- +(1.4 cm,0);
    \draw[xshift=12.5 cm,thick] (12.5 cm,0) circle (.3cm);
  \end{tikzpicture}
\end{center}

\vspace{0,5cm}

Theorem \ref{critpkern} allow to give purely algebraic criterion for motivic equivalence of projective linear groups.

\begin{crit}[Inner type $A_n$ criterion]\label{critan}
Let $A$ and $B$ be two central simple $F$-algebras of the same degree. The projective linear groups $\PGL(A)$ and $\PGL(B)$ are motivic equivalent modulo $p$ if and only if the classes of the $p$-primary components of $A$ and $B$ generate the same subgroup in the Brauer group of $F$.
\end{crit}

\begin{proof}The functors $\mathrm{Tits}_p(\PGL(A))$ and $\mathrm{Tits}_p(\PGL(B))$ are equal if and only if for any field extension $E/F$, the index of the $p$-primary components of $A_E$ and $B_E$ coincide. This condition is well-known to be equivalent to the fact that these algebras generate the same subgroup in the Brauer group of $F$.
\end{proof}

As mentioned before, the above classification contrasts with Karpenko's \cite[Criterion 7.1]{criteria}. Assorted with the motivic dichotomy of projective linear groups, the following results hold.

\begin{cor}
Let $A$ and $B$ be two central simple algebras of the same degree and fix a sequence $i_1,...,i_k$ such that $X=X(i_1,...,i_k;A)$ and $Y=X(i_1,...,i_k;B)$ are anisotropic. The following conditions are equivalent :
\begin{enumerate}
\item the motives of $X$ and $Y$ are isomorphic with coefficients in $\mathbb{F}_p$;
\item the upper motives of $X$ and $Y$ are isomorphic;
\item the varieties $X$ and $Y$ are $p$-equivalent;
\item the projective linear groups $\PGL(A)$ and $\PGL(B)$ are motivic equivalent modulo $p$.
\end{enumerate}
\end{cor}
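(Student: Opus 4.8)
The plan is to prove the cycle $(1)\Rightarrow(2)\Rightarrow(3)\Rightarrow(4)\Rightarrow(1)$; since $(2)\Leftrightarrow(3)$ is nothing more than the characterization of $p$-equivalence by upper motives recalled from \cite{upper}, all of the content is concentrated in $(3)\Rightarrow(4)$. The other three implications are formal. For $(4)\Rightarrow(1)$: as $\PGL(A)$ and $\PGL(B)$ are of inner type, the $\ast$-action on their common Dynkin diagram is trivial and the minimal field of definition of any subset is $F$ itself, so for $\Theta_0=\{\alpha_{i_1},\dots,\alpha_{i_k}\}$ the standard motives $M_{\Theta_0,\PGL(A)}$ and $M_{\Theta_0,\PGL(B)}$ are exactly the Chow motives $M(X)$ and $M(Y)$ in $\CM(F;\mathbb{F}_p)$, whence motivic equivalence modulo $p$ gives $M(X)\cong M(Y)$. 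For $(1)\Rightarrow(2)$: the upper motive $U_X$ is the unique indecomposable direct summand of $M(X)$ whose restriction to a separable closure contains the Tate motive $\mathbb{F}_p$, so the Krull--Schmidt property carries the isomorphism $M(X)\cong M(Y)$ over to $U_X\cong U_Y$.

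The heart of the proof is $(3)\Rightarrow(4)$, where both the anisotropy hypothesis and the rigid structure of inner type $A_n$ --- the \emph{motivic dichotomy of projective linear groups} --- come into play. Write $e:=\min_j v_p(i_j)$. One checks that over every extension $E/F$ the variety $X=X(i_1,\dots,i_k;A)$ acquires a zero-cycle of degree prime to $p$ exactly when $v_p(\ind A_E)\le e$; hence $X$ has the same $p$-isotropy function as, and so is $p$-equivalent to, the generalized Severi--Brauer variety $\SB(p^{e};A_p)$ attached to the $p$-primary component $A_p$ of $A$, and likewise $Y$ is $p$-equivalent to $\SB(p^{e};B_p)$. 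Anisotropy forces $e<v_p(\ind A)$ and $e<v_p(\ind B)$, so these Severi--Brauer varieties are themselves anisotropic, and from $(3)$ we obtain $U_{\SB(p^{e};A_p)}\cong U_{\SB(p^{e};B_p)}$. Feeding this into the theory of upper motives and the index-reduction/incompressibility results for generalized Severi--Brauer varieties (\cite{upper}; compare also \cite{criteria}), one deduces that $A_p$ and $B_p$ have the same index over every extension of $F$, equivalently that they generate the same subgroup of $\Br(F)$. By Criterion \ref{critan} (equivalently Theorem \ref{critpkern}, or Theorem \ref{gencrit} via upper-equivalence) this is exactly the condition for $\PGL(A)$ and $\PGL(B)$ to be motivic equivalent modulo $p$, which closes the cycle.

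The step I expect to be the main obstacle is this last inference, i.e. upgrading the isomorphism class of a single upper motive to the equality of the entire Tits $p$-index functor. Without anisotropy the upper motive could be trivial and carry no information at all; even with it, one must understand the motivic decomposition of twisted flag varieties in type $A_n$ (through \cite{chermergil} and the computations behind \cite{upper}) well enough to recover the Brauer-theoretic data --- this is where the ``dichotomy'' input genuinely does the work, everything else being bookkeeping with Krull--Schmidt.
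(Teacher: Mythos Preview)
Your proof is correct and follows essentially the same cycle of implications as the paper. The only cosmetic difference is in the key step $(2)/(3)\Rightarrow(4)$: you unpack the argument via the $p$-equivalence $X\sim_p\SB(p^e;A_p)$ and then appeal to incompressibility results from \cite{upper}, whereas the paper simply cites \cite[Theorem~4.4]{dec3} (the ``motivic dichotomy of projective linear groups'' you allude to) as a black box asserting that an isomorphism of anisotropic upper motives forces $\PGL(A_p)$ and $\PGL(B_p)$ to be upper-equivalent, and then concludes via Criterion~\ref{critan}.
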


\begin{proof}
The upper motives of $X$ and $Y$ are both isomorphic to some upper motives $M$ and $M'$ of the projective linear groups associated to the division algebras $A_p$ and $B_p$ Brauer equivalent to the $p$-primary components of $A$ and $B$, respectively. If $X$ is anisotropic and the upper motives $U_X$ and $U_Y$ are isomorphic, then the projective linear groups of $\PGL(A_p)$ and $\PGL(B_p)$ are upper-equivalent, according \cite[Theorem 4.4]{dec3} (see also \cite{dec5}). Criterion \ref{critan} then implies that the projective linear groups associated to $A$ and $B$ are motivic equivalent modulo $p$, hence the motives of $X$ and $Y$ are isomorphic.
\end{proof}

These results show that the good motivic invariant to study the rational geometry of Severi-Brauer varieties is the collection of all the motives with $\mathbb{F}_p$ coefficients.

\begin{cor}\label{stabrat}Let $A$ and $B$ be two central simple algebras of the same degree. The Severi-Brauer varieties $\SB(A)$ and $\SB(B)$ are stably birationally equivalent if and only if the projective linear groups $\PGL(A)$ and $\PGL(B)$ are motivic equivalent.
\end{cor}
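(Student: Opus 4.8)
The plan is to turn both sides of the claimed equivalence into a statement about the Brauer group, using Criterion \ref{critan} on the motivic side and Amitsur's theorem on the birational side, and then to observe that the two resulting conditions are literally the same. Note first that $\PGL(A)$ and $\PGL(B)$ are automatically $\ast$-equivalent, being inner forms of $\PGL_n$ with $n=\deg A=\deg B$, so ``motivic equivalent'' makes sense and means ``motivic equivalent modulo $p$ for every prime $p$''.

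The translation on the motivic side is immediate from Criterion \ref{critan}: $\PGL(A)$ and $\PGL(B)$ are motivic equivalent modulo $p$ iff the $p$-primary components $A_p$ and $B_p$ generate the same subgroup of $\Br(F)$; letting $p$ run over all primes and using the primary decomposition of the (finite cyclic) groups $\langle[A]\rangle$, $\langle[B]\rangle$ inside the torsion group $\Br(F)$, this holds for all $p$ simultaneously iff $\langle[A]\rangle=\langle[B]\rangle$. The translation on the birational side is Amitsur's theorem: for any central simple $F$-algebra $C$, the field $F(\SB(C))$ splits a central simple algebra $D$ iff $[D]\in\langle[C]\rangle$; and a Severi-Brauer variety acquires a rational point, hence becomes a projective space, exactly when its algebra splits. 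For the implication ``stably birational $\Rightarrow$ motivic equivalent'', suppose $F(\SB(A))(t_1,\dots,t_r)\cong_F F(\SB(B))(s_1,\dots,s_m)$. Since $A$ splits over $F(\SB(A))$ it splits over this purely transcendental extension, hence over $F(\SB(B))$ because $\Br$ is injective along purely transcendental extensions; by Amitsur $[A]\in\langle[B]\rangle$, and symmetrically $[B]\in\langle[A]\rangle$, so $\langle[A]\rangle=\langle[B]\rangle$ and we conclude by the first translation.

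For the converse, assume $\langle[A]\rangle=\langle[B]\rangle$. By Amitsur, $B$ splits over $F(\SB(A))$ and $A$ splits over $F(\SB(B))$, so $\SB(B)_{F(\SB(A))}$ and $\SB(A)_{F(\SB(B))}$ are projective spaces of dimension $\deg B-1=\deg A-1=n-1$. Computing the function field of the geometrically integral variety $\SB(A)\times_F\SB(B)$ in the two evident ways, namely as $F(\SB(A))\bigl(\SB(B)_{F(\SB(A))}\bigr)$ and as $F(\SB(B))\bigl(\SB(A)_{F(\SB(B))}\bigr)$, gives $F(\SB(A))(t_1,\dots,t_{n-1})\cong_F F(\SB(B))(s_1,\dots,s_{n-1})$, i.e. $\SB(A)$ and $\SB(B)$ are stably birational. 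The steps involving injectivity of $\Br$ along purely transcendental extensions and the identity $F(X\times_F Y)=F(X)(Y_{F(X)})$ are routine; the only genuine external input beyond Criterion \ref{critan} is Amitsur's theorem, so the ``hard part'' is essentially just invoking it correctly and keeping the bookkeeping of transcendence degrees straight — which is painless here precisely because $\deg A=\deg B$.
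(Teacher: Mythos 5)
Your proof is correct and follows essentially the same route as the paper: both reduce the motivic side via the inner type $A_n$ criterion to the condition that $[A]$ and $[B]$ generate the same subgroup of $\Br(F)$, and identify that condition with stable birational equivalence of the Severi--Brauer varieties. The only difference is one of detail: you derive the birational half explicitly from Amitsur's theorem together with the product trick $F(\SB(A)\times_F\SB(B))\cong F(\SB(A))(t_1,\dots,t_{n-1})\cong F(\SB(B))(s_1,\dots,s_{n-1})$, whereas the paper simply cites as well known the equivalent reformulation in terms of the indices of the $p$-primary components over all field extensions.
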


\begin{proof}
The Severi-Brauer varieties $\SB(A)$ and $\SB(B)$ are stably birationally equivalent if and only if for any field extension $E/F$ and any prime $p$, the index of the $p$-primary components of $A_E$ and $B_E$ are equal. This conditions is equivalent to the motivic equivalence modulo $p$ of $\PGL(A)$ and $\PGL(B)$, for any prime $p$.
\end{proof}

\section{Motivic equivalence of orthogonal groups}

If $q$ is a non-degenerate quadratic form over a field $F$ of characteristic different from $2$, the special orthogonal group $\SO(q)$ is either of type $B_n$ or $D_n$, whether the dimension of $q$ is odd or even. The Tits indices of orthogonal groups are provided in \cite{tits}. We higher Tits $p$-indices of orthogonal groups are as follows. If the prime $p$ is odd, the Tits $p$-index of $\SO(q)$ contains all the vertices of its Dynkin diagram. The only interesting prime is thus $p=2$, and $\SO(q)$ satisfies $(H_2)$ by Springer's theorem. The functors $\mathrm{Tits}(G)$ and $\mathrm{Tits}_2(G)$ are thus equal, and we now assume that $p=2$. 

If either $\dim(q)$ is odd or has trivial discriminant and even dimension, the orthogonal group of $q$ is of inner type and thus the motivic equivalence of such groups is determined by Theorem \ref{critpsp}. We thus focus in this section on the case of orthogonal groups of outer type $D_n$. 

\begin{lem}\label{witupper}
Let $q$ and $q'$ be two quadratic forms over $F$, such that $\dim(q)=\dim(q')$. If the orthogonal groups $\SO(q)$ and $\SO(q')$ are upper equivalent, then for any field extension $E/F$, the Witt indices of $q_E$ and $q'_E$ coincide.
\end{lem}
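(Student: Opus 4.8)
The plan is to show that the Witt index of $q_E$ is determined by the upper motives $U_{\Theta, \SO(q)}$ for the various $\ast$-invariant subsets $\Theta$, so that upper-equivalence forces the splitting patterns of $q$ and $q'$ to agree. The key observation is that the Witt index of $q_E$ equals the largest $i$ such that the variety of totally isotropic $i$-dimensional subspaces of $q$ acquires a $0$-cycle of degree coprime to $2$ over $E$ — and in fact a rational point, since $\SO(q)$ satisfies $(H_2)$ by Springer's theorem, which is already noted in the excerpt. So the Witt index of $q_E$ is completely encoded in the higher Tits $2$-index of $\SO(q)$ evaluated at $E$. Thus the statement I want is essentially a consequence of combining Theorem \ref{gencrit} with Theorem \ref{critpkern}.

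Concretely, I would argue as follows. Since $\SO(q)$ and $\SO(q')$ are upper-equivalent and $2$-inner (orthogonal groups of type $B_n$ or $D_n$ become inner over at most a quadratic extension, so they are $2$-inner), Theorem \ref{gencrit} gives that they are motivic equivalent modulo $2$. Then Theorem \ref{critpkern} yields that the higher Tits $2$-indices $\mathrm{Tits}_2(\SO(q))$ and $\mathrm{Tits}_2(\SO(q'))$ are equal as functors on $Fields/F$. Fix now a field extension $E/F$. The Tits $2$-index of $\SO(q_E)$ records, among its shaded vertices, exactly which varieties of totally isotropic subspaces of a given dimension have a $0$-cycle of degree coprime to $2$ over $E$; since $\SO(q)$ satisfies $(H_2)$, a variety of totally isotropic $i$-planes of $q_E$ has such a $0$-cycle if and only if it is isotropic, i.e. if and only if the Witt index of $q_E$ is at least $i$. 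Hence the Witt index of $q_E$ is the number of consecutive shaded vertices at the appropriate end of the Dynkin diagram in $\mathrm{Tits}_2(\SO(q_E))$ — a quantity that depends only on the functor $\mathrm{Tits}_2(\SO(q))$. The same description applies verbatim to $q'$, and since the two functors agree at $E$, the Witt indices of $q_E$ and $q'_E$ coincide.

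The main thing to be careful about is the precise translation between the shaded vertices of the Tits $2$-index and the Witt index, especially in the outer type $D_n$ case, where the two extremal vertices of the Dynkin diagram are swapped by the $\ast$-action and only the $\ast$-invariant subset near them is relevant; one must check that "Witt index $\geq i$" corresponds correctly to the shading of the vertex $\alpha_i$ (with the usual caveat near $\alpha_{n-1}, \alpha_n$ when $q_E$ splits off a hyperbolic part of near-maximal dimension). This is exactly the kind of bookkeeping carried out in \cite{tits}, and in the $D_n$ case it reflects the fact that maximal totally isotropic subspaces come in two families only when the form is split — which is why passing to the $\ast$-invariant subset causes no trouble here. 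I do not anticipate a genuine obstacle: once $(H_2)$ is invoked to replace "$0$-cycle of degree coprime to $2$" by "isotropic", the result is a direct reading-off of the Witt index from the higher Tits $2$-index, and the equality of those functors has already been established via Theorems \ref{gencrit} and \ref{critpkern}.
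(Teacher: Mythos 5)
Your argument is correct, and it is not circular: Lemma \ref{witupper} is not used in the proofs of Theorems \ref{gencrit} and \ref{critpkern}, so you are free to quote them. But your route is genuinely different from the paper's. You run the chain ``upper equivalent $\Rightarrow$ motivic equivalent mod $2$ (Theorem \ref{gencrit}) $\Rightarrow$ equal higher Tits $2$-indices (Theorem \ref{critpkern})'' and then read off the Witt index from the Tits $2$-index via Springer's theorem, which makes the lemma an instance of the general dictionary between shaded vertices and isotropy --- at the price of carrying the two main theorems of the paper and the bookkeeping at the fork of the $D_n$ diagram (where, as you note, $(n-1)$-dimensional totally isotropic subspaces correspond to the $\ast$-orbit $\{\alpha_{n-1},\alpha_n\}$ rather than a single vertex, and $\ast$-equivalence guarantees $q$ and $q'$ have the same discriminant extension so the inner/outer dichotomy matches over every $E$). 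The paper instead argues directly and locally: for $i=i_W(q_E)\leq k-1$ it uses that $E_{\{i\},\SO(q)}/E$ is purely transcendental because $X(i;q_E)$ is isotropic, transfers isotropy to $X(i;q')$ through the isomorphism of upper motives $U_{\{i\},\SO(q)}\cong U_{\{i\},\SO(q')}$ together with Springer's theorem, and disposes of the hyperbolic case $i_W(q_E)=k$ separately by observing that the upper motives of a split group are Tate motives, so the motive of the quadric of $q'_E$ is a sum of Tate motives and $q'_E$ is hyperbolic. The paper's proof is thus self-contained and sidesteps the fork of the Dynkin diagram except in that one terminal case; yours is shorter modulo the quoted theorems and makes transparent why the statement is really about higher Tits $2$-indices. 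In fact you could streamline further: the converse half of the proof of Theorem \ref{critpkern} only uses $p$-equivalence of the relevant varieties, so upper equivalence gives equality of the higher Tits $2$-indices directly, without passing through Theorem \ref{gencrit} at all.
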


\begin{proof}Since the result corresponds to Proposition \ref{critpsp} for odd dimensional quadratic forms, we assume here that $\dim(q)=2k$. Assume that $\SO(q)$ and $\SO(q')$ are upper-equivalent, and fix a field extension $E/F$. We may exchange $q$ and $q'$ and thus assume that $i_W(q_E)=i$ is greater than $i_W(q'_E)$. We assume first that $i$ is at most $k-1$. In this case the variety $X(i;q_E)$ is isotropic, hence the field extension $E_{\{i\},\SO(q)}/E$ is purely transcendental. The upper motives $U_{\{i\},\SO(q)}$ and $U_{\{i\},\SO(q')}$ are isomorphic, thus the variety $X(i;q')$ has a rational point over $E_{\{i\},\SO(q)}$. The Witt index of $q'_E$ is thus greater than $i$, and $i_W(q_E)=i_W(q'_E)$. Now if $i_W(q_E)=k$, the orthogonal group $\SO(q_E)$ is split and its set of upper motives is reduced to the shifts of the Tate motive. Since $\SO(q)$ and $\SO(q')$ are upper equivalent, the motive of the quadric associated to $q'$ isomorphic to a direct sum of Tate motives, therefore $i_W(q'_E)=k$.
\end{proof}

\begin{crit}[Criterion for orthogonal groups]\label{critan}
Let $q$ and $q'$ be two quadratic forms of the same dimension. The orthogonal groups $\SO(q)$ and $\SO(q')$ are motivic equivalent if and only if for any field extension $E/F$, the Witt indices of $q_E$ and $q'_E$ coincide.
\end{crit}

\begin{proof}
The sufficient condition is given by lemma \ref{witupper}. Assuming that for any field extension $E/F$ the Witt indices of $q_E$ and $q'_E$ are equal, the associated orthogonal groups $\SO(q)$ and $\SO(q')$ are $\ast$-equivalent by \cite[Lemma 2.6]{criteria}, it thus remains to apply Theorem \ref{gencrit}.
\end{proof}

Note that even though the above criterion is stated in the context of motives with coefficients in $\mathbb{F}_2$, the result covers Vishik's criterion for the motivic equivalence of quadrics, since by \cite{vish4}, \cite[Theorem 1]{olivF2} (see also \cite[Théorème E.11.2]{kahnfq}), the change of coefficients functor induces a bijection between the set of isomorphism classes of motives of quadrics with coefficients in $\mathbb{Z}$ and $\mathbb{F}_2$. Assorted with Vishik's criterion, we get the following.

\begin{cor}
Let $q$ and $q'$ be two quadratic forms of the same dimension. The motives of the associated quadrics are isomorphic (with either coefficients in $\mathbb{Z}$ or $\mathbb{F}_2$) if and only if the orthogonal groups $\SO(q)$ and $\SO(q')$ are motivic equivalent.
\end{cor}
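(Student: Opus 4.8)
The plan is to deduce the Corollary by chaining three facts already in hand, all of which revolve around the single invariant $E/F\mapsto i_W(q_E)$: the criterion of Vishik recalled earlier (isomorphism of the $\mathbb{Z}$-motives of two quadrics of equal dimension $\Leftrightarrow$ equality of all Witt indices), the bijection recalled just above between isomorphism classes of motives of quadrics with $\mathbb{Z}$- and with $\mathbb{F}_2$-coefficients, and Criterion~\ref{critan} (motivic equivalence of $\SO(q)$ and $\SO(q')$ $\Leftrightarrow$ equality of all Witt indices). Odd primes never enter, since $\mathrm{Tits}_p(\SO(q))=\mathrm{Tits}(\SO(q))$ for $p$ odd, so for orthogonal groups ``motivic equivalent'' and ``motivic equivalent modulo $2$'' coincide.

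For the implication from motivic equivalence to isomorphism of the quadric motives: if $\SO(q)$ and $\SO(q')$ are motivic equivalent they are motivic equivalent modulo $2$. The vertex $\alpha_1$ indexing the projective quadric is fixed by the $\ast$-action of $\SO(q)$ (which permutes at most the two spinor vertices), so $\Theta:=\{\alpha_1\}$ is $\ast$-invariant for both groups; the minimal intermediate field is therefore $F$ and $M_{\Theta,\SO(q)}$ is just the $\mathbb{F}_2$-motive of the projective quadric $X_{\{\alpha_1\},\SO(q)}$ of $q$, and similarly for $q'$. By the definition of motivic equivalence modulo $2$ these two motives are isomorphic, and the change-of-coefficients bijection promotes the isomorphism to $\mathbb{Z}$-coefficients. (Alternatively, Criterion~\ref{critan} gives $i_W(q_E)=i_W(q'_E)$ for all $E/F$, whence the $\mathbb{Z}$-isomorphism via Vishik's criterion directly.)

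Conversely, if the motives of the two quadrics are isomorphic for one of the two coefficient rings, the change-of-coefficients bijection makes them isomorphic for both, in particular with $\mathbb{Z}$-coefficients; Vishik's criterion then forces $i_W(q_E)=i_W(q'_E)$ for every field extension $E/F$, and Criterion~\ref{critan} converts this equality of splitting patterns into the motivic equivalence of $\SO(q)$ and $\SO(q')$.

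I do not anticipate a genuine obstacle here: the substance has been placed in the three cited statements — in particular Criterion~\ref{critan} already absorbs Lemma~\ref{witupper}, Theorem~\ref{gencrit}, the $\ast$-equivalence input, and the reduction to the prime $2$. The only point requiring a line of verification, used in the first implication, is the standard identification of the projective quadric of $q$ with the projective $\SO(q)$-homogeneous variety of the $\ast$-invariant type $\{\alpha_1\}$, so that its motive genuinely occurs as a standard motive $M_{\{\alpha_1\},\SO(q)}$ with intermediate field $F$; this is already implicit in the discussion preceding Criterion~\ref{critan}.
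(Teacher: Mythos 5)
Your proposal is correct and is essentially the argument the paper intends (the corollary is stated as an immediate consequence, ``assorted with Vishik's criterion''): one chains the orthogonal-group criterion (motivic equivalence $\Leftrightarrow$ equality of all Witt indices), Vishik's criterion (equality of all Witt indices $\Leftrightarrow$ isomorphism of the $\mathbb{Z}$-motives), and the Haution--Vishik change-of-coefficients bijection to pass between $\mathbb{Z}$ and $\mathbb{F}_2$. Your extra remark identifying the quadric with the standard motive of type $\{\alpha_1\}$ is a harmless verification already implicit in the surrounding discussion.
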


\section{Motivic equivalence and central simple algebras with involutions}

We now briefly describe the situation for other absolutely simple algebraic groups of classical type and inner type which are associated with central simple algebras with involutions. Again in the case where $p$ is an odd prime, all the vertices of the Dynkin diagram of such groups are shaded in their Tits $p$-index. We thus assume in the sequel that $p=2$ and we also assume that $char(F)$ is not $2$.

We briefly recall some notions on the invariants associated to central simple algebras with involutions. Assume that $(A,\sigma)$ is a central simple algebra with involution and $I$ is a right ideal in $A$. The orthogonal ideal $I^{\bot}$ of $I$ is defined as the annihilator of the left ideal $\sigma(I)$. An ideal $I$ is said to be isotropic if $I\subset I^{\bot}$. The index of the central simple algebras $(A,\sigma)$ with involution is the set of the reduced dimensions of the isotropic right ideals of $A$.

\begin{df}
Let $(A,\sigma)$ be a central simple algebra with involution. The $p$-index of $(A,\sigma)$ is the union of the indices of the $(A_L,\sigma_L)$, where $L$ goes through all the field extensions of $F$ of degree coprime to $p$.
\end{df}

\begin{crit}[Type $C_n$ criterion]Let $(A,\sigma)$ and $(B,\tau)$ be two central simple algebras of the same degree endowed with symplectic involutions of the first kind. The algebraic groups $Aut(A,\sigma)\!°$ and $Aut(B,\tau)\!°$ are motivic equivalent if and only if for any field extension $E/F$, the $2$-indices of $(A_E,\sigma_E)$ and $(B_E,\tau_E)$ are equal. 
\end{crit}

\begin{crit}[Inner type $D_n$ criterion]Let $(A,\sigma)$ and $(B,\tau)$ be two central simple algebras of the same degree endowed with orthogonal involutions. The algebraic groups $Aut(A,\sigma)\!°$ and $Aut(B,\tau)\!°$ are motivic equivalent if and only if for any field extension $E/F$, the $2$-indices of $(A_E,\sigma_E)$ and $(B_E,\tau_E)$ are equal. 
\end{crit}

\bibliographystyle{amsplain}

\end{document}